\def\endthebibliography{%
  \def\@noitemerr{\@latex@warning{Empty `thebibliography' environment}}%
  \endlist
}
\newtheorem{assumption}{Assumption}
\newtheorem{definition}{Definition}
\newtheorem{proposition}{Proposition}
\newtheorem{remark}{Remark}
\newtheorem{lemma}{Lemma}
\newtheorem{corollary}{Corollary}
\newtheorem{theorem}{Theorem}
\title{\LARGE \bf
Secondary Controller Design for the Safety of Nonlinear Systems \\ via Sum-of-Squares Programming*
}
\author{Yankai Lin, Michelle S. Chong, and Carlos Murguia% <-this % stops a space
%\thanks{*This work was not supported by any organization}% <-this % stops a space
\thanks{The authors are with Department of Mechanical Engineering, Eindhoven University of Technology, the Netherlands.
        {\tt\small \{y.lin2,m.s.t.chong,c.g.murguia\}@ tue.nl}}%
}
\begin{document}

\maketitle

\thispagestyle{empty}
\pagestyle{empty}

%%%%%%%%%%%%%%%%%%%%%%%%%%%%%%%%%%%%%%%%%%%%%%%%%%%%%%%%%%%%%%%%%%%%%%%%%%%%%%%%
\begin{abstract}

We consider the problem of ensuring the safety of nonlinear control systems under adversarial signals. Using Lyapunov based reachability analysis, we first give sufficient conditions  to assess safety, i.e., to guarantee that the states of the control system, when starting from a given initial set, always remain in a prescribed safe set. We consider polynomial systems with semi-algebraic safe sets. Using the S-procedure for polynomial functions, safety conditions can be formulated as a Sum-Of-Squares (SOS) programme, which can be solved efficiently. When safety cannot be guaranteed, we provide tools via SOS to synthesize polynomial controllers that enforce safety of the closed loop system. The theoretical results are illustrated through numerical simulations.

\end{abstract}

%%%%%%%%%%%%%%%%%%%%%%%%%%%%%%%%%%%%%%%%%%%%%%%%%%%%%%%%%%%%%%%%%%%%%%%%%%%%%%%%
\section{INTRODUCTION}

In recent years, cyber-physical systems have gained increasing attention by researchers due to its wide applications in modern industrial systems. In these systems, computation of control laws and the physical behavior are coupled via networked communications. Though more efficient operations of the systems are enabled, more technical challenges arise at the same time. One of the pertinent vulnerabilities is when the network is compromised and malicious data is injected into the system. In \cite{cardenas2008research}, many examples including the well-known StuxNet malware incident were reported. Therefore, investigation on controller design methods that ensure safety is of significant importance. We aim to address one particular instance of the safety-ensuring control problem of which is to keep the states of the control system within a prescribed set, called a \textit{safe set}, for an infinite time horizon. In an earlier work \cite{lin2022plug}, we consider adding an output feedback dynamic secondary controller to a linear system that has already been stabilized by a pre-designed primary controller. In this work, we extend the result to polynomial nonlinear systems with semi-algebraic safe sets.

There are many approaches to the safe stabilization problem in existing literature. Among them, reachability analysis is one natural way of ensuring that for states from a given initial set are steered into the desired location without entering an unsafe region. However, it is in general computationally expensive to solve these problems exactly due to the associated partial differential equations (PDEs) that need to be dealt with \cite{margellos2011hamilton,bansal2017hamilton}. Another approach that bypasses the difficulty of dealing with PDEs is via tools of set invariance \cite{blanchini1999set}. If there exists a subset of the safe set that is forward invariant, then it is guaranteed that the states of the system always remains within the safe set. Sufficient conditions for the forward invariance of autonomous systems can be given by Lyapunov-like sufficient conditions on functions called barrier certificates \cite{prajna2007framework}. These conditions are later extended in \cite{ames2016control}, where sufficient conditions on the control barrier function (CBF) are given to guarantee robust forward invariance of the safe sets for nonlinear systems driven by control inputs. Based on these conditions, a control law that guarantees safety can be synthesized by solving a quadratic program online. Tools from dissipativity theory can also be used to formulate a similar condition that verifies safety of interconnected systems \cite{coogan2014dissipativity}.

In this work, we consider systems with polynomial dynamics and take the approach of ensuring forward invariance of a given set using tools from Sum-Of-Squares (SOS) programming to address the safe control problem. A motivation to use SOS programming is that, though some progress has been made recently\cite{tan2021high,xiao2021adaptive}, the synthesis of CBF for general nonlinear systems is still a challenging problem. In the seminal work \cite{parillo2000structured}, it is shown that a SOS program is equivalent to a semidefinite program which can be solved efficiently. Hence, by restricting the class of Lyapunov-like functions to be polynomial functions, we can efficiently translate the complicated synthesis problem to a convex optimization program. Similar ideas have been successfully applied to various nonlinear control problems such as the search of polynomial Lyapunov functions to check stability \cite{jarvis2003some}.\linebreak

\vspace{-3mm}
\noindent
Our contributions are summarized below.

\noindent
\textbf{1)} We consider the setup of using limited resources (in terms of limited access to system outputs) to design a secondary controller to ensure safety of a nonlinear controlled system under resource-limited adversaries. Sufficient conditions in terms of SOS programmes are given to synthesise polynomial state feedback controllers. This generalizes our prior work \cite{lin2022plug} on linear systems to nonlinear systems with polynomial dynamics.

\noindent
\textbf{2)} We consider the case where control inputs and external signals appear in the system dynamics. Unlike the previous work \cite{jones2019using2}, where it is assumed that the disturbance has finite energy, we consider sensor and actuator attack signals that are constrained by state-dependent upper bounds. This is to capture the fact that intelligent cyber attackers, depending on their available resources \cite{teixeira2015secure}, are constrained in the class of signals they can inject to remain stealthy, such that they can continue affecting the system without being detected.
%\end{enumerate}

The rest of this paper is organized as follows. We first present the preliminaries in Section \ref{pre}. The problem formulation is given in Section \ref{secPF}. Section \ref{sec3} presents the sufficient conditions to verify the safety of nonlinear systems based on SOS programming and the S-procedure for polynomial functions. Section \ref{sec4} proposes SOS-based synthesis tools for a polynomial output feedback secondary controller to guarantee the safety of the overall system. Section \ref{sec5} illustrates the main results via numerical simulations on a polynomial system. Lastly, conclusions and future research directions are given in Section \ref{sec6}.

\section{PRELIMINARIES}\label{pre}
\subsection{Notations}
Let $\mathbb{R}=(-\infty,\infty)$, $\mathbb{R}_{\geq 0}=[0,\infty)$, $\mathbb{R}_{>0}=(0,\infty)$ and $\mathbb{R}^{n}$ denotes the $n$-dimensional Euclidean space. We use $\mathbf{0}$ to denote the zero matrix with appropriate dimensions. For a given square matrix $R$, $\text{Tr}[R]$ denotes the trace of $R$. We use $A\succ 0$ ($A\prec 0$) and $A\succeq 0$ ($A\preceq 0$) to denote the matrix $A$ is positive (negative) definite and positive (negative) semidefinite, respectively. Given a polynomial function $p(x):\mathbb{R}^n\rightarrow\mathbb{R}$, $p$ is called SOS if there exist polynomials $p_i:\mathbb{R}^n\rightarrow\mathbb{R}$ such that $p(x) =\sum_{i=1}^{k}(p_i(x))^2$. The set of SOS polynomials and set of polynomials with real coefficients in $x$ are denoted by $\Sigma[x]$ and $\mathbb{R}[x]$, respectively. A vector of dimension $n$ composed of SOS (real) polynomial functions of $x$ is denoted by $\Sigma^n[x]$ ($\mathbb{R}^n[x]$).

\subsection{Preliminaries on Polynomial Functions}
A standard SOS program is a convex optimization problem of the following form \cite{blekherman2012semidefinite}
\begin{equation}\label{pre-sos}
    \begin{split}
        \underset{m}{\min}&\ b^\top m\\
        \text{s.t.}\ &p_i(x,m)\in\Sigma[x],\ i=1,2,\ldots,n,
        \end{split}
\end{equation}
where $p_i(x,m)=c_{i0}(x)+\sum_{j=1}^{k}c_{ij}(x)m_j$, $c_{ij}(x)\in\mathbb{R}[x]$, and $b$ is a given vector. It is shown in \cite[p. 74]{blekherman2012semidefinite} that (\ref{pre-sos}) is equivalent to a semidefinite program.

A useful tool that will be used extensively in this paper is the generalization of the S-procedure \cite{boyd1994linear} to polynomial functions. This can be done via the Positivstellensatz certificates of set containment \cite{bochnak1987geometrie}.

\begin{lemma}\label{S-pro}
Given $p_0$, $p_1$, $\cdots$, $p_m\in\mathbb{R}[x]$, if there exist $\lambda_1$, $\lambda_2$, $\cdots$, $\lambda_m\in\Sigma[x]$ such that
\begin{equation*}
    p_0-\sum_{i=1}^{m}\lambda_ip_i\in\Sigma[x],
\end{equation*}
then we have
\begin{equation*}
    \overset{m}{\underset{i=1}\bigcap}\{x|p_i(x)\geq0\}\subseteq\{x|p_0(x)\geq0\}.
\end{equation*}
\end{lemma}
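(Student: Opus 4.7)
The plan is to give a direct pointwise argument: evaluate the SOS decomposition at an arbitrary point of the intersection and exploit the fact that SOS polynomials are nonnegative everywhere.

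First I would fix an arbitrary $x^\star \in \bigcap_{i=1}^{m}\{x \mid p_i(x) \geq 0\}$, so that by definition $p_i(x^\star) \geq 0$ for each $i = 1,\ldots,m$. The goal reduces to showing $p_0(x^\star) \geq 0$.

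Next, I would use the elementary but crucial observation that any $q \in \Sigma[x]$ satisfies $q(y) \geq 0$ for every $y \in \mathbb{R}^n$, since $q$ is a finite sum of squares of real polynomials evaluated at $y$. Applied to the hypothesis, this gives
\begin{equation*}
    \lambda_i(x^\star) \geq 0 \text{ for } i=1,\ldots,m, \quad \text{and} \quad p_0(x^\star) - \sum_{i=1}^m \lambda_i(x^\star)\, p_i(x^\star) \geq 0.
\end{equation*}
Combining these with $p_i(x^\star) \geq 0$ yields $\lambda_i(x^\star)\, p_i(x^\star) \geq 0$ for each $i$, and therefore
\begin{equation*}
    p_0(x^\star) \geq \sum_{i=1}^m \lambda_i(x^\star)\, p_i(x^\star) \geq 0,
\end{equation*}
which is the desired conclusion. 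Since $x^\star$ was arbitrary in the intersection, the claimed set inclusion follows.

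There is no real obstacle here; the lemma is a one-line consequence of the pointwise nonnegativity of SOS polynomials, and the whole content of the statement is the algebraic certificate it provides. The substantive work (which is not required here) would be the converse direction, i.e., a Positivstellensatz-type result guaranteeing the existence of the multipliers $\lambda_i$, but that is not asserted in the lemma and can be safely sidestepped.
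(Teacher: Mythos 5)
Your argument is correct and complete: the pointwise evaluation of the SOS certificate, together with the global nonnegativity of the multipliers $\lambda_i$ and of the residual SOS polynomial, immediately gives $p_0(x^\star)\geq 0$ on the intersection. The paper itself offers no proof beyond a citation to the literature, and what you have written is exactly the standard one-line argument that the cited reference contains, so there is nothing to add or correct.
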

\begin{proof}
    See \cite[Chapter 2.2]{jarvis2003lyapunov}.
\end{proof}

\section{PROBLEM FORMULATION}\label{secPF}
We consider the setup where the plant is modelled as a nonlinear system taking the following form
\begin{equation}\label{plt}
\begin{split}
    \dot{x}_p&=f_p(x_p)+g_p(x_p)u\\
    y&=h_p(x_p),
    \end{split}
\end{equation}
where $x_p\in\mathbb{R}^{n_p}$ is the state vector of the plant, $y\in\mathbb{R}^{n_y}$ is the measured output, $u\in\mathbb{R}^{n_u}$ is the input of the plant, function $f_p: \mathbb{R}^{n_p}\rightarrow\mathbb{R}^{n_p}$ is continuous with $f_p(0)=0$ and function $g_p: \mathbb{R}^{n_p} \rightarrow \mathbb{R}^{n_u}$ is continuous. We assume that system \eqref{plt} is stabilizable, i.e., there exists a control law $u_p$ generated by the \textit{primary controller} which has already been designed to stabilize the plant \eqref{plt} and takes the following form
\begin{equation}\label{control1}
\begin{split}
    \dot{x}_c&=f_c(x_c,y+a_y)\\
    u_p&=h_c(x_c)+a_u,
    \end{split}
\end{equation}
with controller state $x_c\in\mathbb{R}^{n_c}$. The functions $f_c$ and $h_c$ are assumed to satisfy the regularity conditions such that the primary controller \eqref{control1} exists. Controller \eqref{control1} is pre-designed to stabilize (\ref{plt}) with the input signal $u_p:\mathbb{R}^{n_c} \to \mathbb{R}^{n_u}$ and is subject to potential adversarial attacks denoted by the attack vector $a=[a_u^\top, a_y^\top]^\top\in\mathbb{R}^{n_u+n_y}$, where $a_u:\mathbb{R}_{\geq 0} \to \mathbb{R}^{n_u}$ and $a_y:\mathbb{R}_{\geq 0} \to \mathbb{R}^{n_y}$ denote actuator and sensor attacks respectively. Since the primary controller \eqref{control1} is pre-designed without being aware of the attacks, the safety of the closed loop may be compromised (precise definition is given later in Definition \ref{def-safe}). Therefore, we propose introducing a \textit{secondary controller}, that runs in conjunction with the primary controller \eqref{control1}. The secondary controller takes the form of a static output feedback controller that uses measurements of a subset of the plant outputs $y$ which are either available locally or known to be safeguarded against malicious manipulation (e.g., via encryption or watermarking):
\begin{equation}\label{control2}
\begin{split}
    u_s&=h_s(x_s),
    \end{split}
\end{equation}
where $x_s=C_sy=C_sh_p(x_p)$ is a subset of the plant measurements $y$ that are available to the secondary controller \eqref{control2}, and $u_s$ is the secondary control law. The overall controller for our \textit{safe} primary-secondary control scheme is given by
\begin{equation}\label{uuuuu}
    u=u_p+E_uu_s,
\end{equation}
where $E_u$ is a selection matrix we use to denote what entries of the primary controller are affected by the secondary. In this work, we assume $C_s$ and $E_u$ are given to model the case where a fixed set of resources are locally available. How to optimally choose $C_s$ and $E_u$ is left for future work. Note that the secondary controller \eqref{control2} uses attck-free measurements only and it generates an input that will be fed back to the plant reliably. Consequently, no attack signal $a$ appears in (\ref{control2}). The setup is illustrated in Fig.~\ref{fig:secondary_safety}.

\begin{figure}[h!]
    \centering
    \includegraphics[width=7cm]{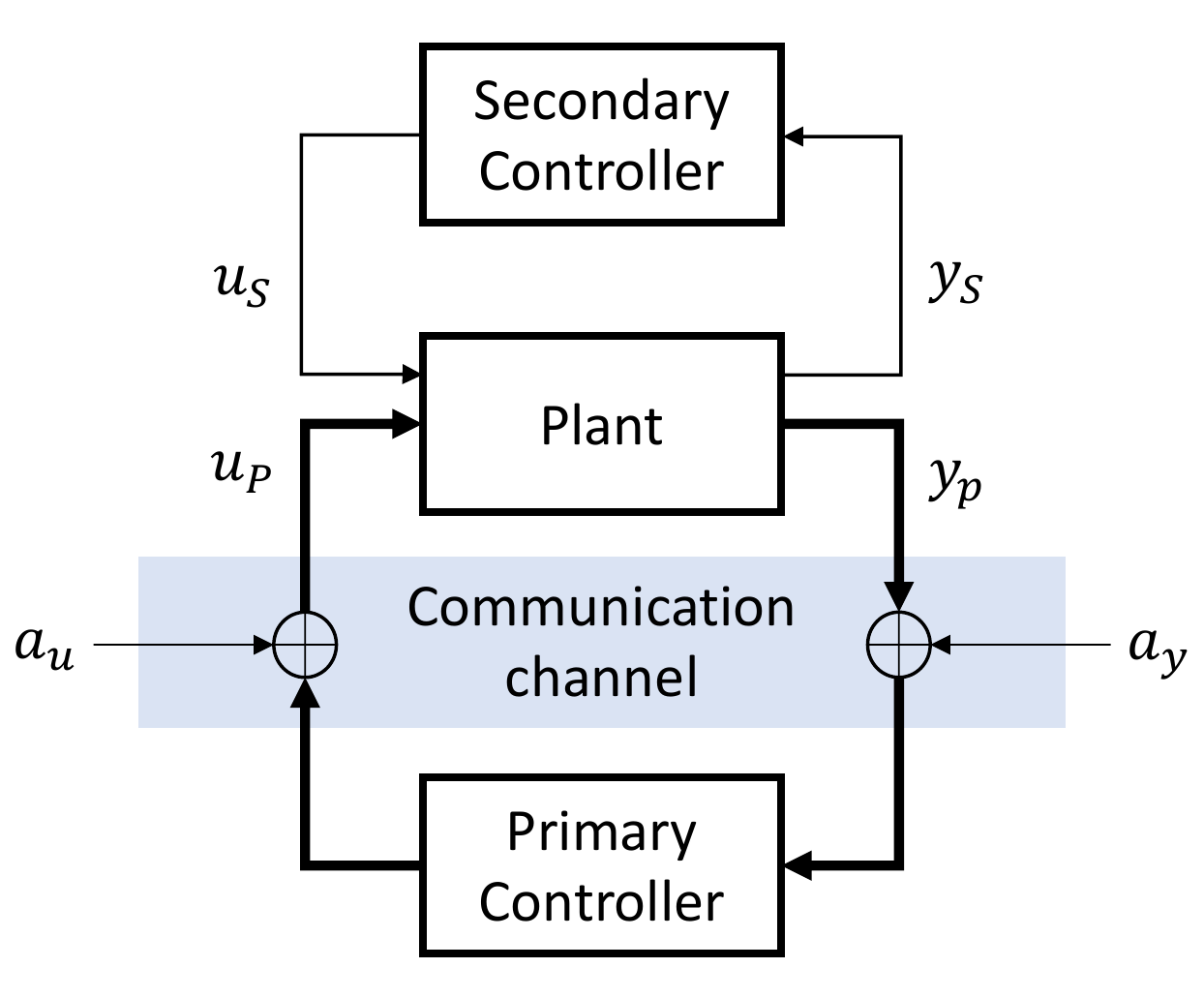}
    \caption{Ensuring safety with a secondary controller.}
    \label{fig:secondary_safety}
\end{figure}

It can be verified that the closed loop system (\ref{plt})-(\ref{uuuuu}) can be written in the following form
\begin{equation}\label{closed-loop}
\begin{split}
    \dot{x}&=f(x,a)+g(x)u_s,
    \end{split}
\end{equation}
where $x=[x_p^\top,x_c^\top]^\top$,
\begin{equation}\label{f-exp}
\begin{split}
    f(x,a)&=\left[ \begin{array}{c} f_p(x_p)+g_p(x_p)(h_c(x_c)+a_u) \\
    f_c(x_c,h_p(x_p)+a_y)\end{array} \right]\\
    g(x)&=\left[ \begin{array}{c} g_p(x_p)E_u \\
    \mathbf{0}\end{array} \right].
    \end{split}
\end{equation}

It is worth noting that the expression of the closed loop system (\ref{closed-loop}) is also able to capture the case where state feedback is used for the primary controller. Suppose we have $u_p=h_c(x_p+a_y)+a_u$, then we have
\begin{equation}\label{f-exp2}
	\begin{split}
		f(x,a)&= f_p(x_p)+g_p(x_p)(h_c(x_p+a_y)+a_u)\\
		g(x)&=g_p(x_p)E_u.
	\end{split}
\end{equation}
Throughout the paper, we will make the following assumption about the vector field of the closed loop system (\ref{closed-loop}).
\begin{assumption}\label{A-polyn}
	The closed loop system (\ref{plt})-(\ref{uuuuu}) written compactly in \eqref{closed-loop} is such that $f(x,a)\in\mathbb{R}^{n_p+n_c}[x,a]$ and $g(x)\in\mathbb{R}^{(n_p+n_c)\times n_s}[x]$.  \hfill
\end{assumption}
\begin{remark}
	By imposing Assumption \ref{A-polyn} on the closed-loop system (\ref{closed-loop}), we make the analysis more computationally tractable via SOS tools. This assumption can be satisfied if functions $f_p,g_p,h_p,f_c,h_c$ are all polynomials of their respective arguments, which may come from least square regression or polynomial approximation of another nonlinear function.
\end{remark}

The goal of the secondary controller (\ref{control2}) is to ensure that when the overall closed loop system (\ref{plt})-(\ref{uuuuu}) is subject to cyber attacks $a$, the safety of the closed loop system \eqref{closed-loop} can be ensured in the following sense.
\begin{definition}\label{def-safe}
    The closed loop system (\ref{closed-loop}) is safe if its state $x$ remains within a given safe set $\mathcal{S}$ for all $t\geq 0$.
\end{definition}

We describe the safe set by $\mathcal{S}:=\{x|s(x)\geq0\}$, where $s(x)\in\mathbb{R}[x]$. We solve the following problems in this paper.
\begin{enumerate}
    \item Give sufficient conditions to check if the primary controller \eqref{control1} alone can render the plant \eqref{plt} safe in the presence of attacks.
    \item Enhance safety of the closed loop system (\ref{closed-loop}) by synthesizing the secondary controller (\ref{control2}) such that the attacker needs to invest more resources to violate the safety condition.
\end{enumerate}

\section{INVARIANT SET BASED ANALYSIS}\label{sec3}
The first step of our analysis is to assess if the primary controller can ensure the safety of the closed loop system in the presence of the attack signal $a$. In \cite{lin2022plug}, it is assumed that the attack signal $a$ is norm bounded. This is to take into account that, intelligent adversaries often seek to remain stealthy and undetected to be able to continuously send malicious signals to the system. In this work, we impose a more general condition on the attack signal:
\begin{equation}\label{atkcons}
    a\in \mathcal{A}:=\{a|A(x,a)\geq 0\}.
\end{equation}
where $A(x,a)\in\mathbb{R}[x,a]$. The condition (\ref{atkcons}) covers the situation where adversaries that have access to the states of the system inject attack signals $a$ to the system based on their measurements of $x$. Although the requirement that $A$ is a polynomial in $x$ and $a$ might be restrictive in some cases, it generalizes the condition used in \cite{lin2022plug} which is a special case of (\ref{atkcons}) and can be used as an outer-approximation of the real constraints on the attack signal.

Since we aim to first find the worst attack signals that lead to system trajectories inside the safe set by the primary controller (\ref{control1}), we set $E_u=\mathbf{0}$ and $g(x)=\mathbf{0}$ for all $x\in\mathbb{R}^{n_p+n_c}$. Note that when $g(x)=\mathbf{0}$, the closed-loop system (\ref{closed-loop}) is a nonlinear system driven by the attack signal $a$,
\begin{equation}\label{closed-loop-pri}
    \dot{x}=f(x,a).
\end{equation}
To this end, we define the reachable set of nonlinear systems driven by external signals.
\begin{definition}
The (forward) reachable set $\mathcal{R}_a$ of the nonlinear system $\dot{x}=f(x,a)$ from the initial set $\mathcal{T}$ driven by $a\in\mathcal{A}$ is defined as the set of all trajectories $\phi(t,x(0),a)$ for all $t\geq 0$ and $x(0)\in\mathcal{T}$ where $\phi(t,x(0),a)$ is a solution to $\dot{x}=f(x,a)$ at time $t$ with the initial condition $x(0)$.
\end{definition}

If we can guarantee that the reachable set of (\ref{closed-loop-pri}) is a subset of the safe set $\mathcal{S}$, then safety of the system can be ensured since system states can only reach a set that is fully contained in the prescribed safe set. Exact computation of the reachable set of a nonlinear system can be difficult in general. In this work, we construct the set $\mathcal{E}_{a}$ as an outer approximation of the reachable set such that
\begin{equation}\label{contain}
    \mathcal{R}_a\subseteq \mathcal{E}_a,
\end{equation}
where $\mathcal{R}_a$ is the reachable set of (\ref{closed-loop-pri}) from the initial set $\mathcal{T}$. If we manage to find such an $\mathcal{E}_{a}$ such that $\mathcal{E}_{a}\subseteq\mathcal{S}$, then it is sufficient to conclude that $\mathcal{R}_a\subseteq \mathcal{S}$. We will make use of the following result which comes from Nagumo’s theorem for autonomous systems and its extension to systems with exogenous inputs by Aubin \cite{aubin2011viability}, see \cite[Section 3.1]{blanchini1999set}.

\begin{proposition}\label{Prop1}
    Given system (\ref{closed-loop-pri}), if there exists a continuously differentiable function $V:\mathbb{R}^{n_p+n_c}\rightarrow\mathbb{R}$ such that $\mathcal{T}\subseteq\mathcal{E}_{a}:=\{x\in\mathbb{R}^{n_p+n_c}|V(x)\leq1\}$ and
    \begin{equation}\label{Lya-condi}
        \frac{\partial V}{\partial x}f(x,a)\leq 0,\ \forall \hspace{1mm} V(x)=1, x\in\mathbb{R}^{n_p+n_c}, a\in\mathcal{A};
    \end{equation}
    then, we have $\mathcal{R}_a\subseteq\mathcal{E}_{a}$.
\end{proposition}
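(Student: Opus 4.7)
The plan is to reduce the claim to forward invariance of the sublevel set $\mathcal{E}_a = \{x : V(x) \leq 1\}$ under the dynamics (\ref{closed-loop-pri}) for every admissible signal $a \in \mathcal{A}$. Once this is established, the inclusion $\mathcal{R}_a \subseteq \mathcal{E}_a$ is immediate: any trajectory $\phi(t, x(0), a)$ starts with $x(0) \in \mathcal{T} \subseteq \mathcal{E}_a$ by hypothesis, and by invariance it can never leave, so $\phi(t, x(0), a) \in \mathcal{E}_a$ for all $t \geq 0$ and all $x(0) \in \mathcal{T}$. The key structural observation is that the topological boundary of $\mathcal{E}_a$ is contained in the level set $\{V(x) = 1\}$, which is precisely where the Lyapunov-type inequality (\ref{Lya-condi}) is enforced.

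The forward invariance itself I would argue by contradiction. Fix $x(0) \in \mathcal{T}$ and an admissible $a$, and let $\phi(t)$ be the corresponding solution. Suppose there exists $t^* > 0$ with $V(\phi(t^*)) > 1$. Since $V$ is $C^1$ and $\phi$ is a solution of the ODE, the scalar map $t \mapsto V(\phi(t))$ is continuously differentiable, and since $V(\phi(0)) \leq 1$, continuity yields a last instant $\tau := \sup\{t \in [0, t^*] : V(\phi(t)) \leq 1\}$ with $V(\phi(\tau)) = 1$ and $V(\phi(t)) > 1$ for all $t \in (\tau, t^*]$. Computing the one-sided derivative at $\tau$ via the difference quotient gives
\begin{equation*}
\left.\frac{d}{dt} V(\phi(t))\right|_{t=\tau} = \lim_{t \to \tau^+} \frac{V(\phi(t)) - 1}{t - \tau} \geq 0,
\end{equation*}
while condition (\ref{Lya-condi}), applied at the boundary point $\phi(\tau)$ with $a(\tau) \in \mathcal{A}$, furnishes the reverse inequality
\begin{equation*}
\left.\frac{d}{dt} V(\phi(t))\right|_{t=\tau} = \frac{\partial V}{\partial x}(\phi(\tau))\, f(\phi(\tau), a(\tau)) \leq 0.
\end{equation*}

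The main obstacle is that these two inequalities only pin the derivative at zero, leaving open a tangential escape in which $V(\phi(t))$ touches the level set $\{V = 1\}$ and then drifts above it through higher-order behaviour; the weak inequality in (\ref{Lya-condi}) does not on its own exclude this. This is exactly the subtlety addressed by the Nagumo--Aubin viability framework: inequality (\ref{Lya-condi}) is the standard subtangent/viability condition for $\mathcal{E}_a$ under (\ref{closed-loop-pri}), so I would close the argument by invoking the Nagumo-type characterization of forward invariant sets for systems with exogenous inputs reviewed in \cite[Section 3.1]{blanchini1999set} and \cite{aubin2011viability}, which directly upgrades (\ref{Lya-condi}) into forward invariance of $\mathcal{E}_a$ and rules out the tangential escape scenario.
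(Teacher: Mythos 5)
Your argument is sound and ultimately rests on the same foundation as the paper, which offers no proof of Proposition~\ref{Prop1} at all and simply invokes Nagumo's theorem and Aubin's extension to systems with exogenous inputs via \cite[Section 3.1]{blanchini1999set}. Your contradiction sketch, and in particular the correct observation that the weak inequality in (\ref{Lya-condi}) alone cannot exclude tangential escape without the Nagumo--Aubin viability machinery, is an accurate account of why the cited result is doing the real work, so your proposal is essentially the same approach with a more explicit reduction to forward invariance.
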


We will use the S-procedure for polynomial functions given by Lemma \ref{S-pro} to certify the set containment conditions in Proposition \ref{Prop1}. Assuming thet the initial set takes the form $\mathcal{T}:=\{x\in\mathbb{R}^{n_p+n_c}|T(x)\geq 0\}$ where $T(x)\in\mathbb{R}[x]$. Our first main result is stated below.

\begin{theorem}\label{T1}
    Consider the closed loop system (\ref{closed-loop}) with only the primary controller in feedback, i.e., $E_u=0$ and $C_s=0$. Given $(s(x),T(x))\in\mathbb{R}[x]\times \mathbb{R}[x]$ and $A(x,a)\in\mathbb{R}[x,a]$, if there exist $V(x)\in\mathbb{R}[x]$, $(\lambda_1,\lambda_2)\in\Sigma[x] \times \Sigma[x]$, $\lambda_3\in\mathbb{R}[x,a]$, and $\lambda_4\in\Sigma[x,a]$ such that
    \begin{equation}\label{sosT1}
        \begin{split}
            &s(x)-\lambda_1(1-V(x))\in\Sigma[x],\\
            &1-V(x)-\lambda_2T(x)\in\Sigma[x],\\
            &-\frac{\partial V}{\partial x}f(x,a)-\lambda_3(V(x)-1)-\lambda_4A(x,a)\in\Sigma[x,a],
        \end{split}
    \end{equation}
    then we have $\mathcal{R}_a\subseteq\mathcal{S}$.
\end{theorem}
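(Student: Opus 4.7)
The plan is to show that the three polynomial identities in (\ref{sosT1}) certify exactly the hypotheses needed to invoke Proposition \ref{Prop1}, together with the chain of inclusions $\mathcal{T} \subseteq \mathcal{E}_a \subseteq \mathcal{S}$. The first two lines are direct applications of Lemma \ref{S-pro}, and the third line encodes the differential condition (\ref{Lya-condi}) via an S-procedure adapted to an equality constraint on the level set $V(x)=1$.

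First I would apply Lemma \ref{S-pro} to the first SOS certificate with $p_0 = s$, $p_1 = 1-V$, and multiplier $\lambda_1 \in \Sigma[x]$, which yields $\{x : V(x) \leq 1\} \subseteq \{x : s(x) \geq 0\}$, i.e., $\mathcal{E}_a \subseteq \mathcal{S}$. Next, the same lemma applied to the second certificate with $p_0 = 1-V$, $p_1 = T$, and multiplier $\lambda_2 \in \Sigma[x]$ gives $\{x : T(x)\geq 0\} \subseteq \{x : V(x) \leq 1\}$, i.e., $\mathcal{T} \subseteq \mathcal{E}_a$, which is precisely the containment required by Proposition \ref{Prop1}.

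The third certificate requires slightly more care because $\lambda_3$ is only assumed to lie in $\mathbb{R}[x,a]$, not in $\Sigma[x,a]$, so Lemma \ref{S-pro} does not apply verbatim. The SOS condition on the third line of (\ref{sosT1}) means that $-\tfrac{\partial V}{\partial x}f(x,a) - \lambda_3(x,a)(V(x)-1) - \lambda_4(x,a) A(x,a) \geq 0$ for every $(x,a)$ in the ambient Euclidean space. Restricting to the surface $V(x)=1$ makes the $\lambda_3$-term vanish identically, regardless of its sign, which is exactly why $\lambda_3$ is allowed to be sign-indefinite and is the standard device for handling an equality constraint inside the Positivstellensatz framework. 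Further restricting to $a \in \mathcal{A}$, so that $A(x,a) \geq 0$, and using that $\lambda_4 \in \Sigma[x,a]$ is globally nonnegative, yields $-\tfrac{\partial V}{\partial x} f(x,a) \geq \lambda_4(x,a) A(x,a) \geq 0$, which is (\ref{Lya-condi}).

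Having verified both hypotheses of Proposition \ref{Prop1}, I conclude $\mathcal{R}_a \subseteq \mathcal{E}_a$, and chaining with $\mathcal{E}_a \subseteq \mathcal{S}$ from the first step gives the desired $\mathcal{R}_a \subseteq \mathcal{S}$. The only genuinely subtle point is the treatment of the sign-indefinite multiplier $\lambda_3$ in the third certificate; it reflects the fact that we are certifying nonnegativity on the codimension-one variety $\{V=1\}$ rather than on a basic semi-algebraic set, and it justifies the asymmetric hypothesis $\lambda_3 \in \mathbb{R}[x,a]$ versus $\lambda_1, \lambda_2, \lambda_4$ being SOS.
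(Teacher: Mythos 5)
Your proof is correct and follows essentially the same route as the paper: use the S-procedure certificates to establish $\mathcal{E}_a\subseteq\mathcal{S}$, $\mathcal{T}\subseteq\mathcal{E}_a$, and condition (\ref{Lya-condi}), then invoke Proposition \ref{Prop1}. The only difference is that you spell out the pointwise argument for the third certificate (where the sign-indefinite $\lambda_3$ handles the equality constraint $V(x)=1$), a detail the paper's proof passes over by simply asserting that the third condition guarantees (\ref{Lya-condi}).
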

\begin{proof}
     Applying the S-procedure for polynomial functions, the first condition in (\ref{sosT1}) guarantees that if $1-V(x)\geq0$, we have $s(x)\geq0$ which means that $\mathcal{E}_a\subseteq\mathcal{S}$. Similarly, the second condition in (\ref{sosT1}) implies $\mathcal{T}\subseteq\mathcal{E}_a$. Lastly, the third condition guarantees (\ref{Lya-condi}). By Proposition \ref{Prop1}, we have $\mathcal{R}_a\subseteq\mathcal{E}_a$ which together with $\mathcal{E}_a\subseteq\mathcal{S}$ implies $\mathcal{R}_a\subseteq\mathcal{S}$.
\end{proof}

Note that the condition (\ref{sosT1}) is a sufficient condition to guarantee that when the initial state is in $\mathcal{T}$, the state of the closed loop system never enters the unsafe region of the state space in the presence of attack signals. Any forward invariant set $\mathcal{E}_a$ that verifies (\ref{sosT1}) will suffice to guarantee safety of the system. It is also possible to optimize the safety performance of the system by minimizing an appropriately chosen objective function. One example is to minimize the volume of the forward invariant set $\mathcal{E}_a$. However, since $V$ is a polynomial function, finding the exact expression of the volume of $\mathcal{E}_a$ might be intractable. However, we can find an ellipsoid $\mathcal{E}_P:=\{x\in\mathbb{R}^{n_p+n_c}|x^\top P x\leq 1\}$, $P\succeq0$ that fully contains $\mathcal{E}_a$ and minimize the volume of the ellipsoid by minimizing the convex function $-\log \det(P)$, where $\det(P)$ is the determinant of the matrix $P$. See \cite{boyd1994linear}.

\begin{corollary}\label{C1}
    Consider the closed loop system (\ref{closed-loop}) with only the primary controller in feedback, i.e., $E_u=0$ and $C_s=0$. Given $s(x),T(x)\in\mathbb{R}[x]$ and $A(x,a)\in\mathbb{R}[x,a]$, if there exist $P\succ0$, $V(x)\in\mathbb{R}[x]$, $\lambda_1,\lambda_2,\lambda_5\in\Sigma[x]$, $\lambda_3\in\mathbb{R}[x,a]$, and $\lambda_4\in\Sigma[x,a]$ such that
    \begin{equation}\label{sosC1}
        \begin{split}
        &\underset{P,V,\lambda_{1-5}}{\min}\ -\log \det(P)\\
        \text{s.t.}\ &s(x)-\lambda_1(1-V(x))\in\Sigma[x],\\
            &1-V(x)-\lambda_2T(x)\in\Sigma[x],\\
            &-\frac{\partial V}{\partial x}f(x,a)-\lambda_3(V(x)-1)-\lambda_4A(x,a)\in\Sigma[x,a],\\
            &x^\top Px-\lambda_5(1-V(x))\in\Sigma[x],
        \end{split}
    \end{equation}
    where $\lambda_{1-5}$ denotes the set $\{\lambda_1,\lambda_2,\lambda_3,\lambda_4,\lambda_5\}$, then we have $\mathcal{R}_a\subseteq\mathcal{S}$. Moreover, $\mathcal{E}_P$ is the ellipsoid with minimal volume such that $\mathcal{E}_a\subseteq\mathcal{E}_P$.
\end{corollary}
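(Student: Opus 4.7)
The plan is to recognise that Corollary \ref{C1} adds to Theorem \ref{T1} two essentially independent ingredients: a set-containment certificate $\mathcal{E}_a\subseteq\mathcal{E}_P$ for a quadratic outer ellipsoid, and a convex objective that sizes this ellipsoid. So the proof decomposes cleanly into three pieces, and most of it should be a direct assembly of already-proven facts.

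First, I would observe that the first three SOS constraints of \eqref{sosC1} are literally the three constraints of \eqref{sosT1}. Consequently, Theorem \ref{T1} applies verbatim and yields $\mathcal{R}_a\subseteq\mathcal{S}$, which is the first conclusion of the corollary. No new computation is required at this step.

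Second, I would turn to the fourth constraint, which is the only one new relative to Theorem \ref{T1}. Reading it through Lemma \ref{S-pro} with the single defining polynomial $p_1(x)=1-V(x)$ and multiplier $\lambda_5\in\Sigma[x]$, the SOS condition certifies the set inclusion
\begin{equation*}
\{x\in\mathbb{R}^{n_p+n_c}\mid 1-V(x)\geq 0\}\;\subseteq\;\{x\in\mathbb{R}^{n_p+n_c}\mid 1-x^\top P x\geq 0\},
\end{equation*}
i.e., $\mathcal{E}_a\subseteq\mathcal{E}_P$. (This is the only sensible reading of that constraint, since a trivial nonnegativity conclusion would add nothing over $P\succ 0$.) This is exactly the outer-ellipsoid containment claimed in the corollary.

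Third, to justify that $\mathcal{E}_P$ is of minimal volume among the ellipsoids containing $\mathcal{E}_a$, I would invoke the standard result from \cite{boyd1994linear} that, for $P\succ 0$, the volume of $\{x\mid x^\top P x\leq 1\}$ is proportional to $(\det P)^{-1/2}$, so minimising $-\log\det(P)$ over $P\succ 0$ minimises this volume. Because $-\log\det(\cdot)$ is convex on the positive-definite cone and the SOS constraints are convex (in the coefficients of the polynomial unknowns for each fixed choice of multipliers), the composite program is a well-posed convex surrogate and its optimum delivers the smallest volume $\mathcal{E}_P$ satisfying $\mathcal{E}_a\subseteq\mathcal{E}_P$.

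The only real subtlety I anticipate is making the last argument precise: strictly speaking, jointly optimising over $V$, $P$ and the multipliers $\lambda_{1-5}$ renders the SOS constraints bilinear (products of unknown polynomials appear), so one loses joint convexity. However, this is an implementation concern about how one \emph{solves} \eqref{sosC1}, not about the correctness of the stated conclusion: whatever feasible $(P,V,\lambda_{1-5})$ the program returns, the three logical implications above hold, and the objective $-\log\det(P)$ is still the correct proxy for the volume of $\mathcal{E}_P$. Thus the proof itself is short, and the bulk of the work is simply citing Theorem \ref{T1}, applying Lemma \ref{S-pro} once more, and invoking the minimum-volume-ellipsoid characterisation from \cite{boyd1994linear}.
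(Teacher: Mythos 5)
Your proposal is correct and follows essentially the same route as the paper's own (very terse) proof: the first three constraints of \eqref{sosC1} reduce the safety claim to Theorem~\ref{T1}, the fourth constraint is read through Lemma~\ref{S-pro} as certifying $\mathcal{E}_a\subseteq\mathcal{E}_P$, and $-\log\det(P)$ is invoked as the standard volume proxy from \cite{boyd1994linear}. Your parenthetical remark is well taken and worth making explicit: as literally printed, $x^\top Px-\lambda_5(1-V(x))\in\Sigma[x]$ only certifies the vacuous inclusion $\mathcal{E}_a\subseteq\{x\mid x^\top Px\geq 0\}$, so the constraint should read $1-x^\top Px-\lambda_5(1-V(x))\in\Sigma[x]$ in order for Lemma~\ref{S-pro} to deliver $\mathcal{E}_a\subseteq\mathcal{E}_P$ --- the reading the paper's proof implicitly assumes.
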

\begin{proof}
     By the S-procedure for polynomial functions, the last condition in (\ref{sosC1}) guarantees that $\mathcal{E}_a\subseteq\mathcal{E}_P$. And the rest of proof follows from the proof of Theorem \ref{T1}.
\end{proof}
\begin{remark}
    The volume of $\mathcal{E}_P$ (or $\mathcal{E}_a$) is just one possible metric of safety. Indeed, for a $\mathcal{E}_a$ with a very small volume, it might still be the case that an element of $\mathcal{E}_a$ is very close to the boundary of the safe set $\mathcal{S}$. Interested readers are referred to \cite{jones2019using} for other possible choice of objective functions.
\end{remark}

It can be seen that the condition (\ref{sosT1}) contains bilinear SOS constraints involving decision variables $(\lambda_1,V)$ and $(\lambda_3,V)$ rendering the optimization problem non-convex. However, the constraints are linear in $\lambda_1$ and $\lambda_3$ when $V$ is fixed and linear in $V$ if $\lambda_1$ and $\lambda_3$ are fixed. As a result, in practice we can solve (\ref{sosT1}) and similarly (\ref{sosC1}) in an alternating fashion between variables $(\lambda_1,\lambda_3)$ and $V$ as done in many existing results, see \cite{jarvis2003some,yin2021backward,schweidel2022safe} for example.

In this work, we adopt an approach similar to \cite[Algorithm 2]{schweidel2022safe} by introducing a positive slack variable $\epsilon$ to the last condition in (\ref{sosT1}). The modified conditions takes the form:
\begin{equation}\label{sosT1-mod}
        \begin{split}
            &s(x)-\lambda_1(1-V(x))\in\Sigma[x],\\
            &1-V(x)-\lambda_2T(x)\in\Sigma[x],\\
            &-\frac{\partial V}{\partial x}f(x,a)+\epsilon-\lambda_3(V(x)-1)-\lambda_4A(x,a)\in\Sigma[x,a].
        \end{split}
    \end{equation}
The role of $\epsilon$ is to relax the decreasing condition on $V$ and allow $\dot{V}$ to be positive by the margin characterized by $\epsilon$. Then we alternately minimize $\epsilon$ over two bilinear groups of decision variables and repeat until $\epsilon\leq0$ is satisfied, which can be done by the following steps.
\begin{enumerate}
    \item Specify the orders of polynomials $V$ and $\lambda_{1-4}$ to be found.
    \item Start with an initial guess $\bar{V}$ and minimize $\epsilon$ over $\lambda_1$ and $\lambda_3$ subject to (\ref{sosT1-mod}).
    \item Set $\lambda_1$ and $\lambda_3$ to the values found in the previous step and minimize $\epsilon$ over $V$ subject to (\ref{sosT1-mod}).
    \item Repeat the previous two steps until an $\epsilon\leq0$ is found.
\end{enumerate}
We will refer to Steps $1)-4)$ as the alternating search algorithm.
\begin{remark}
    Given the specified orders of polynomials, the alternating search algorithm guarantees that after each step, $\epsilon$ is non-increasing. However, there is no guarantee that $\epsilon$ will decrease to be non-positive. It is worth noting that finding a $V$ and $\lambda_{1-4}$ that satisfy (\ref{sosT1}) is only a sufficient condition to guarantee that the reachable set $\mathcal{R}_{a}$ is contained within the safe set $\mathcal{S}$. If the alternating algorithm fails to give a feasible solution to (\ref{sosT1}), then one can try to increase the order of the polynomials and start the algorithm again until a maximum allowable order is reached, in which case (\ref{sosT1}) is claimed to be infeasible (though it can be possibly feasible).
\end{remark}
\begin{remark}\label{rmk-vol}
    Once a valid $V$ is found that verifies the safety of (\ref{closed-loop-pri}), this $V$ can be used as the initial value when solving (\ref{sosC1}). A similar alternating algorithm can be constructed to minimize the volume of $\mathcal{E}_P$. First, given $V$, $-\log \det(P)$ is minimized with respect to $\lambda_1$, $\lambda_3$, and $\lambda_5$. Then, the obtained $\lambda_1$, $\lambda_3$, and $\lambda_5$ are used to minimize $-\log \det(P)$ over $V$. The process is repeated until the decrease in $-\log \det(P)$ is within a specified tolerance.
\end{remark}

\section{SECONDARY CONTROLLER SYNTHESIS}\label{sec4}
When the primary controller alone is insufficient to guarantee the safety of the overall closed loop system \eqref{closed-loop}, introducing a secondary controller may achieve safety. To this end, we aim to systematically design the secondary controller in this section.  To be able to employ the SOS programming tools to computationally solve the synthesis problem, we restrict our class of secondary controller (\ref{control2}) to \textit{polynomial} static feedback, i.e., $h_s(x_s)\in\mathbb{R}[x_s]$. With the secondary controller \eqref{control2} included, the closed loop system takes the form
\begin{equation}\label{closed-loop-sec}
    \begin{split}
    \dot{x}&=f(x,a)+g(x)h_s(x_s):=\tilde{f}(x,a),
    \end{split}
\end{equation}
where the expressions of $f(x,a)$ and $g(x)$ are given in (\ref{f-exp}). Note that the new closed loop system (\ref{closed-loop-sec}) with the secondary controller included takes a form similar to (\ref{closed-loop-pri}). Therefore, we can employ Proposition \ref{Prop1} again to conclude the following result.

\begin{theorem}\label{T2}
    Consider the closed loop system (\ref{closed-loop-sec}). Given $(s(x),T(x))\in\mathbb{R}[x]\times \mathbb{R}[x]$ and $A(x,a)\in\mathbb{R}[x,a]$, if there exist $h_s(x_s)\in\mathbb{R}[x_s]$, $V(x)\in\mathbb{R}[x]$, $(\lambda_1,\lambda_2)\in\Sigma[x]\times \Sigma[x]$, $\lambda_3\in\mathbb{R}[x,a]$, and $\lambda_4\in\Sigma[x,a]$ such that
    \begin{equation}\label{sosT2}
        \begin{split}
            &s(x)-\lambda_1(1-V(x))\in\Sigma[x],\\
            &1-V(x)-\lambda_2T(x)\in\Sigma[x],\\
            &-\frac{\partial V}{\partial x}\tilde{f}(x,a)-\lambda_3(V(x)-1)-\lambda_4A(x,a)\in\Sigma[x,a],
        \end{split}
    \end{equation}
    where $\tilde{f}(x,a)=f(x,a)+g(x)h_s(x_s)$ depends on $h_s(x_s)$, then we have $\tilde{\mathcal{R}}_a\subseteq\mathcal{S}$, where $\tilde{\mathcal{R}}_a$ is the reachable set of (\ref{closed-loop-sec}) from the initial set $\mathcal{T}$.
\end{theorem}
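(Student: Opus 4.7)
The plan is to observe that Theorem \ref{T2} is essentially a direct adaptation of Theorem \ref{T1}: once a polynomial feedback $h_s(x_s)$ is fixed, the closed-loop system (\ref{closed-loop-sec}) is again of the form $\dot{x}=\tilde{f}(x,a)$, i.e., a system with a polynomial vector field driven by an exogenous attack signal $a\in\mathcal{A}$. Consequently, the same chain of reasoning used for Theorem \ref{T1}, namely the S-procedure of Lemma \ref{S-pro} applied to each SOS condition followed by an invocation of Proposition \ref{Prop1}, carries through with $f$ replaced by $\tilde{f}$.

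Concretely, I would proceed in three steps. First, applying Lemma \ref{S-pro} to the first SOS inequality in (\ref{sosT2}) gives the implication $1-V(x)\geq 0 \Rightarrow s(x)\geq 0$, i.e., $\mathcal{E}_a:=\{x\mid V(x)\leq 1\}\subseteq\mathcal{S}$. Second, Lemma \ref{S-pro} applied to the second inequality delivers $\mathcal{T}\subseteq\mathcal{E}_a$. Third, Lemma \ref{S-pro} applied to the last inequality, now in the joint variables $(x,a)$, yields
\begin{equation*}
V(x)-1\geq 0 \text{ and } A(x,a)\geq 0 \ \Longrightarrow\ -\frac{\partial V}{\partial x}\tilde{f}(x,a)\geq 0,
\end{equation*}
which in particular certifies the boundary decrease condition (\ref{Lya-condi}) for the system $\dot{x}=\tilde{f}(x,a)$ on $\{V(x)=1\}\cap \mathcal{A}$. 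Proposition \ref{Prop1} applied to $\dot{x}=\tilde{f}(x,a)$ then gives $\tilde{\mathcal{R}}_a\subseteq\mathcal{E}_a$, and composing with the first containment yields $\tilde{\mathcal{R}}_a\subseteq\mathcal{S}$, as required.

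I do not expect a genuine mathematical obstacle here: the argument is just the proof of Theorem \ref{T1} applied to the augmented vector field $\tilde{f}(x,a)=f(x,a)+g(x)h_s(x_s)$. The only point worth flagging is that Proposition \ref{Prop1} requires some regularity of the dynamics; since Assumption \ref{A-polyn} makes $f$ and $g$ polynomial and $h_s$ is polynomial by construction, $\tilde{f}$ is itself polynomial and hence smooth, so the invocation of Proposition \ref{Prop1} is legitimate. The real difficulty is algorithmic rather than logical: the third SOS condition now contains an additional bilinear coupling between $V$ and $h_s$ through the term $\tfrac{\partial V}{\partial x}g(x)h_s(x_s)$, on top of the $(V,\lambda_3)$ coupling already present in Theorem \ref{T1}, so synthesis is computationally harder than verification, but this does not affect the correctness of the sufficiency claim proved here.
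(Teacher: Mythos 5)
Your proposal is correct and follows exactly the paper's route: the paper's own proof of Theorem \ref{T2} is a one-line reduction to the proof of Theorem \ref{T1} with $f$ replaced by $\tilde{f}$, and you have simply written out that argument in full (S-procedure on each of the three conditions, then Proposition \ref{Prop1} applied to $\dot{x}=\tilde{f}(x,a)$). Your added remarks on the smoothness of $\tilde{f}$ and the extra bilinear coupling are accurate but do not change the approach.
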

\begin{proof}
    The proof follows from the proof of Theorem \ref{T1} by replacing $f$ by $\tilde{f}$.
\end{proof}

If condition (\ref{sosT2}) is satisfied by a set of decision variables $\{h_s,V,\lambda_1,\lambda_2,\lambda_3,\lambda_4\}$, then it is sufficient to conclude that the state of the closed loop system \eqref{closed-loop-sec} never leaves the safe set $\mathcal{S}$ when initialized in $\mathcal{T}$. In the synthesis of the secondary controller, we can also find an ellipsoidal outer-approximation of $\mathcal{E}_a$,  $\mathcal{E}_P:=\{x\in\mathbb{R}^{n_p+n_c}|x^\top P x\leq 1\}$ for some $P\succeq0$. Then we minimize the volume of the ellipsoid by minimizing the convex function $-\log \det(P)$.

\begin{corollary}\label{C2}
    Consider the closed loop system (\ref{closed-loop-sec}). Given $(s(x),T(x))\in\mathbb{R}[x]\times \mathbb{R}[x]$ and $A(x,a)\in\mathbb{R}[x,a]$, if there exist $P\succ0$, $h_s(x_s)\in\mathbb{R}[x_s]$, $V(x)\in\mathbb{R}[x]$, $(\lambda_1,\lambda_2,\lambda_5)\in\Sigma[x]\times \Sigma[x] \times \Sigma[x]$, $\lambda_3\in\mathbb{R}[x,a]$, and $\lambda_4\in\Sigma[x,a]$ such that
    \begin{equation}\label{sosC2}
        \begin{split}
        &\underset{P,h_s,V,\lambda_{1-5}}{\min}\ -\log \det(P)\\
        \text{s.t.}\ &s(x)-\lambda_1(1-V(x))\in\Sigma[x],\\
            &1-V(x)-\lambda_2T(x)\in\Sigma[x],\\
            &-\frac{\partial V}{\partial x}\tilde{f}(x,a)-\lambda_3(V(x)-1)-\lambda_4A(x,a)\in\Sigma[x,a],\\
            &x^\top Px-\lambda_5(1-V(x))\in\Sigma[x],
        \end{split}
    \end{equation}
    where $\tilde{f}(x,a)=f(x,a)+g(x)h_s(x_s)$ depends on $h_s(x_s)$, then we have $\tilde{\mathcal{R}}_a\subseteq\mathcal{S}$, where $\tilde{\mathcal{R}}_a$ is the reachable set of (\ref{closed-loop-sec}) from the initial set $\mathcal{T}$. Moreover, $\mathcal{E}_P$ is the ellipsoid with minimal volume such that $\tilde{\mathcal{R}}_a\subseteq\mathcal{E}_P$.
\end{corollary}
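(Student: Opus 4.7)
The plan is to mirror the proof of Corollary \ref{C1} almost verbatim, since Corollary \ref{C2} differs only in that the drift $f(x,a)$ is replaced by $\tilde f(x,a)=f(x,a)+g(x)h_s(x_s)$, and the decision variable $h_s$ is optimized jointly with the other unknowns. The first three SOS constraints of (\ref{sosC2}) are syntactically identical to those of (\ref{sosT2}), so invoking Theorem \ref{T2} directly yields the safety conclusion $\tilde{\mathcal{R}}_a\subseteq\mathcal{S}$. The only genuinely new ingredient is the fourth constraint, and I would handle it by the same one-line S-procedure argument used in Corollary \ref{C1}.

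More concretely, I would proceed as follows. First, I would apply Lemma \ref{S-pro} (the polynomial S-procedure) to the fourth constraint $x^\top P x - \lambda_5(1-V(x))\in\Sigma[x]$ with $p_0=x^\top P x$ and $p_1=1-V(x)$, which certifies the set containment $\mathcal{E}_a\subseteq \mathcal{E}_P$ exactly as in the proof of Corollary \ref{C1}. Second, I would invoke Theorem \ref{T2} on the first three constraints to conclude $\tilde{\mathcal{R}}_a\subseteq\mathcal{E}_a\subseteq\mathcal{S}$. Chaining these two containments gives $\tilde{\mathcal{R}}_a\subseteq\mathcal{E}_P$, which is the ellipsoidal enclosure claimed. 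The interpretation of $-\log\det(P)$ as the (log-)volume proxy of $\mathcal{E}_P$ is standard and appeals only to \cite{boyd1994linear}, so no further work is required to justify the objective.

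Because every step is essentially a rewrite of the corresponding step in the proofs of Theorem \ref{T2} and Corollary \ref{C1}, I do not expect any analytical obstacle in the proof itself: the only subtlety is notational bookkeeping to make sure $h_s(x_s)$ is carried through the third constraint via $\tilde f$ without disturbing the S-procedure certificates, whose multipliers $\lambda_3,\lambda_4$ are independent of $h_s$. I would therefore keep the proof to two or three sentences, explicitly writing ``the proof follows from the proof of Corollary \ref{C1} by replacing $f$ with $\tilde f$,'' after spelling out the S-procedure application for the ellipsoidal containment. The genuine difficulty is computational rather than mathematical, namely the bilinearities among $h_s$, $V$, and $\lambda_{1-5}$ in (\ref{sosC2}); this is not part of the statement to be proved, but it is worth flagging as the practical bottleneck that must be addressed by an alternating-search scheme analogous to the one described after Corollary \ref{C1} and in Remark \ref{rmk-vol}.
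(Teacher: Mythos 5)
Your proposal is correct and matches the paper's approach exactly: the paper in fact omits an explicit proof of Corollary \ref{C2}, relying precisely on the combination you describe — Theorem \ref{T2} for the first three constraints (which is Theorem \ref{T1} with $f$ replaced by $\tilde f$) and the one-line S-procedure argument from the proof of Corollary \ref{C1} for the fourth constraint certifying $\mathcal{E}_a\subseteq\mathcal{E}_P$. Nothing further is needed.
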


By introducing the secondary control term $h_s(x_s)$, a new bilinear term $\frac{\partial V}{\partial x}h_s(x_s)$ appears in (\ref{sosT2}) and (\ref{sosC2}). This, together with other bilinear terms in (\ref{sosT1}) and (\ref{sosC1}), makes \eqref{sosT2} and \eqref{sosC2}  non-convex optimization problems, respectively. Nevertheless, the constraints are linear in $\lambda_{1-5}$ and $h_s$ when $V$ is fixed and linear in $V$ if $\lambda_{1-5}$ and $h_s$ are fixed. There are no bilinear terms in (\ref{sosT2}) and (\ref{sosC2}) involving products of $\lambda_{1-5}$ and $h_s$. Thus, there is no need to perform an additional round of alternation. The variables $\lambda_{1-5}$ and $h_s$ can be solved simultaneously when $V$ is given. We again introduce the slack variable $\epsilon$ to (\ref{sosT2}) such that the modified condition takes the form
\begin{equation}\label{sosT2-mod}
        \begin{split}
            &s(x)-\lambda_1(1-V(x))\in\Sigma[x],\\
            &1-V(x)-\lambda_2T(x)\in\Sigma[x],\\
            &-\frac{\partial V}{\partial x}\tilde{f}(x,a)+\epsilon-\lambda_3(V(x)-1)-\lambda_4A(x,a)\in\Sigma[x,a].
        \end{split}
    \end{equation}
We alternately minimize $\epsilon$ over $(\lambda_{1,3},h_s)$ given $V$ and over $V$ given $(\lambda_{1,3},h_s)$ and repeat until $\epsilon\leq0$ is satisfied, which can be done by the following steps.
\begin{enumerate}
    \item Specify the orders of polynomials $V,h_s,\lambda_{1-4}$.
    \item Start with an initial value of $\bar{V}$ and minimize $\epsilon$ over $h_s$, $\lambda_1$ and $\lambda_3$ subject to (\ref{sosT2-mod}).
    \item Set $h_s$, $\lambda_1$ and $\lambda_3$ to the values found in the last step and minimize $\epsilon$ over $V$ subject to (\ref{sosT2-mod}).
    \item Repeat the previous two steps until an $\epsilon\leq0$ is found.
\end{enumerate}

\begin{remark}
    The initial guess of $V$ can be taken from the result of checking conditions (\ref{sosC1}) and (\ref{sosT1-mod}). To be more specific, if there does not exist a $V$ that satisfies (\ref{sosT1-mod}) for a non-positive $\epsilon$, then the initial value of $V$ can be set to be the one that minimizes $\epsilon$ subject to (\ref{sosT1-mod}). Hopefully, with the additional contribution by $h_s$, $\epsilon$ can be made negative after several iterations . If there does exist a $V$ that verifies the safety of the closed loop system (\ref{closed-loop-pri}), then the solution to (\ref{sosC1}) can be used. In such cases, (\ref{sosT1-mod}) will always be feasible since $h_s=0$ is a trivial secondary controller that ensures the safety of the closed loop system.
\end{remark}

\begin{remark}
    When the alternating algorithm does not find a non-positive $\epsilon$ that satisfies (\ref{sosT2-mod}), one can increase the order of the variables including $h_s$ which is the newly added term. Moreover, changing the values of $C_s$ and $E_u$ (asking for more locally available resources) might also be helpful in synthesizing a secondary controller that enforces the safety of the closed loop system.
\end{remark}

 Once valid $V$ and $h_s$ are found to satisfy (\ref{sosT2}), this $V$ and $h_s$ can be used as the initial value when solving (\ref{sosC2}) following the discussion in Remark \ref{rmk-vol}. However, it should be noted that the conditions (\ref{sosT2}) in Theorem \ref{T2}, though being sufficient to guarantee the safety of the closed loop system, are not sufficient to guarantee that the origin is asymptotic stable in the absence of the attack signal $a$. This is in contrast to the linear counterpart shown in \cite{lin2022plug}, where the linear secondary controller automatically guarantees asymptotic stability of the origin if it guarantees safety of the closed loop system for bounded attack signals. The design of a secondary controller aiming to recover the performance of the primary controlled system while ensuring safety will be left for future work.

\section{NUMERICAL SIMULATION}\label{sec5}
In this section, we illustrate our main results via numerical simulations of a second order nonlinear system. Suppose a primary controller has been designed such that the closed loop system (\ref{closed-loop-sec}) takes the following form
\begin{equation}\label{example1}
    \begin{split}
        \dot{x}_1&=-x_1+x_2+a_1\\
        \dot{x}_2&=-x_2-x_1^2x_2+a_2+h_s(x_2),
    \end{split}
\end{equation}
where $a=[a_1, a_2]^\top$ is the attack vector and measurement $x_s=x_2$ is used to design the secondary controller. For simplicity, the attack signals are assumed to satisfy $A(x,a)=A(a)=1-a_1^2+a_2^2\geq 0$. Moreover, we assume that the initial set $\mathcal{T}$ is the singleton containing the origin, i.e., $T(x)=-x_1^2-x_2^2\geq 0$. This captures the steady state for a globally asymptotically stable system when there are no attack signals. Under these conditions, we aim to keep the state $x=[x_1, x_2]^\top$ within the safe set $\mathcal{S}$ characterized by $s(x)=1.3-x_1^2-x_2^2\geq 0$.

First, we set $h_s(x_s)=0$ and test if the primary controller alone can keep the states $x$ inside the safe set $\mathcal{S}$. We alternately solve the condition (\ref{sosT1}) in Theorem 1 using SOSTOOLS \cite{papachristodoulou2013sostools} with SeDuMi being the solver \cite{doi:10.1080/10556789908805766}. In this example, we restrict our search of all polynomial variables to polynomials of orders no higher than 4. It turns out that, under these conditions, the condition (\ref{sosT1}) is infeasible. To explore the limitations of the primary controller, instead of insisting that the state stays in the safe set, we impose the condition that the state $x$ remains within the set $\{x\in\mathbb{R}^2|s(x)+\gamma\geq 0\}$ for some $\gamma>0$. We then minimize $\gamma$ subject to condition (\ref{sosT1}), alternately over $V$ and $\lambda_1, \lambda_3$. After 8 alternating iterations, $\gamma$ reaches the value of $0.19$ with $V=V_1(x)=0.67315x_1^2+0.70356x_2^2$. Thus, we have failed to find a $V$ which certifies the safety of the closed loop system via Theorem \ref{T1}. However, as discussed before, this does not mean the primarily controlled system is unsafe since there might exist polynomials of higher orders that satisfy (\ref{sosT1}). In the simulation, we have attempted to increase the order of the function $V(x)$ to 6 which, however, does not result in significant decrease of the value of $\gamma$.

Next we check if a polynomial secondary controller $h_s(x_2)$ of order no higher than 4 can be found to keep the states within the safe set. We again apply the alternating algorithm to check if condition (\ref{sosT2}) is feasible with the initial guess being $V=V_1(x)$. It turns out that, $h_s(x_2)=-0.31761x_2^3-1.2534x_2$ can ensure safety of the closed loop system (\ref{example1}) with $V=V_2(x)=0.8881x_1^2+2.669x_2^2$.

\begin{figure}[ht]

\includegraphics[width=8.5cm]{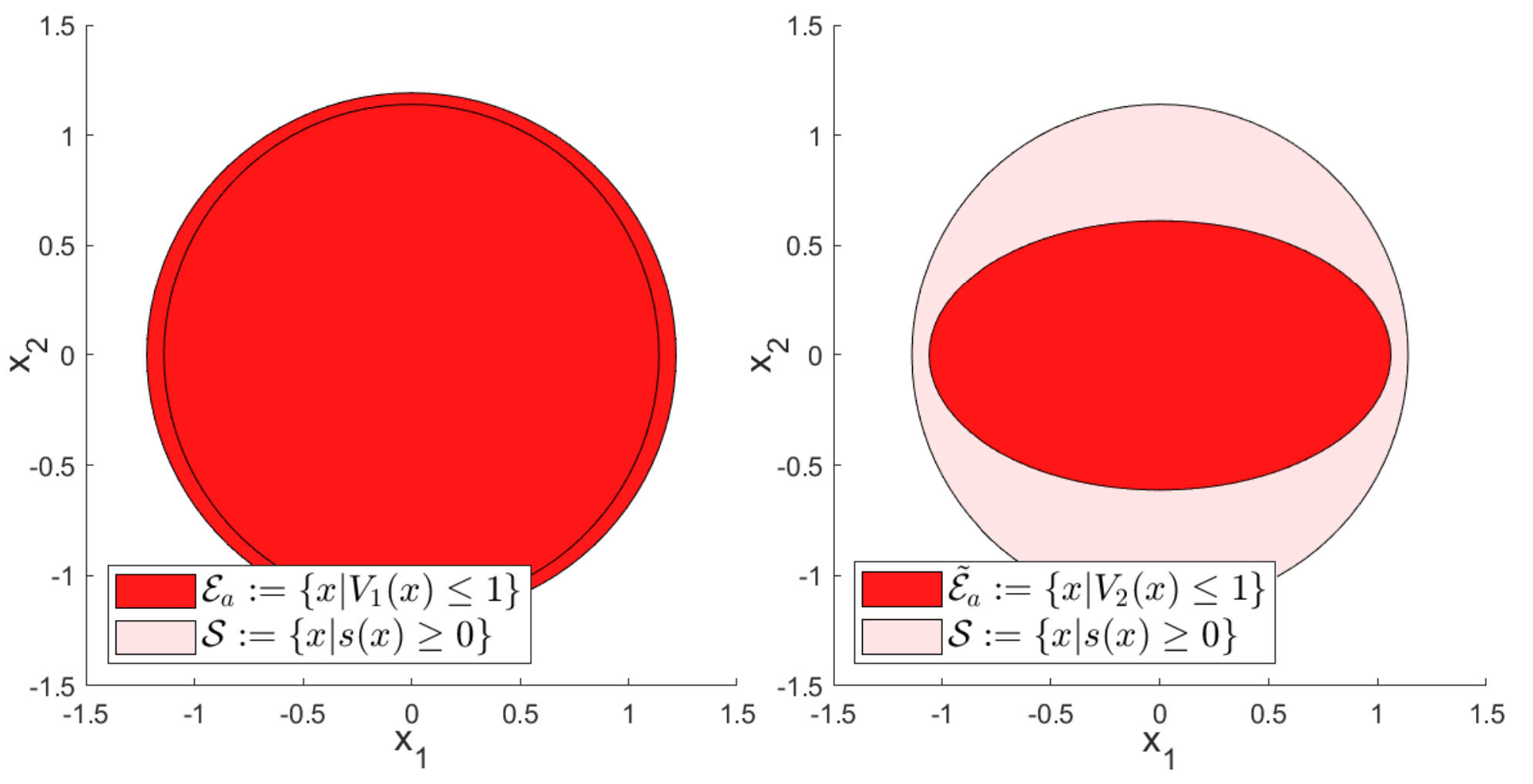}    % The printed column width is 8.4 cm.
\caption{Plots of the safe set $\mathcal{S}$ and the ellipsoidal over-approximation $\mathcal{E}_{a}$ of the reachable set $\mathcal{R}_{a}$ with the primary control only (left) and  ellipsoidal over-approximation $\tilde{\mathcal{E}}_{a}$ of the reachable set $\tilde{\mathcal{R}}_{a}$ with primary and secondary controls (right).}
\label{figsimu}

\end{figure}

The plots of the ellipsoidal over-approximations $\mathcal{E}_{a}$ and $\bar{\mathcal{E}}_{a}$ of the respective reachable sets $\mathcal{R}_a$ and $\tilde{\mathcal{R}}_{a}$ found via the corresponding alternating algorithms and the safe set are given in Fig.~\ref{figsimu}. It can be seen that, after introducing the secondary controller $h_s(x_2)=-0.31761x_2^3-1.2534x_2$ to the closed loop system, its state $x$ always remains in a subset of the safe set when initialized at the origin.

%\addtolength{\textheight}{-12cm}   % This command serves to balance the column lengths
                                  % on the last page of the document manually. It shortens
                                  % the textheight of the last page by a suitable amount.
                                  % This command does not take effect until the next page
                                  % so it should come on the page before the last. Make
                                  % sure that you do not shorten the textheight too much.

%%%%%%%%%%%%%%%%%%%%%%%%%%%%%%%%%%%%%%%%%%%%%%%%%

\section{CONCLUSIONS}\label{sec6}
In this work, based on invariant set analysis and SOS programming, we provide sufficient conditions for safety verification and control design of a class of polynomial nonlinear systems in the presence of adversarial signals. The conditions are computationally tractable via an alternating algorithm. We show that, it is possible to improve the safety performance of a nonlinear system by using a subset of sensors that are attack free. A numerical simulation on a second order nonlinear system verifies the theoretical result.

There are several possible future research direction to be explored. First, it is interesting to investigate a secondary controller design approach that recovers the performance achieved by the primary controller at least locally while ensuring safety. Another interesting topic would be the analysis of how the choice of sensors characterized by the matrix $C_s$ impacts the performance of the secondary controller.

%%%%%%%%%%%%%%%%%%%%%%%%%%%%%%%

%%%%%%%%%%%%%%%%%%%%%%%%%%%%%%%%%%%%%%%%%%%%%%%%%%%%%%%%%%%%%%%%%%%%%%%%%%%%%%%%

%%%%%%%%%%%%%%%%%%%%%%%%%%%%%%%%%%%%%%%%%%%%%%%%%%%%%%%%%%%%%%%%%%%%%%%%%%%%%%%%
%\section*{APPENDIX}

%
%%%%%%%%%%%%%%%%%%%%%%%%%%%%%%%%%%%%%%%%%%%%%%%%%%%%%%%%%%%%%%%%%%%%%%%%%%%%%%%%

\bibliographystyle{ieeetr}
\bibliography{refcdc}

%%%%%%%%%%%%%%%%%%%%%%%%%%%%%%5
\end{document}